\theoremstyle{plain}
\newtheorem{theorem}{Theorem}[section]
\newtheorem*{theorem*}{Theorem}
\newtheorem{proposition}[theorem]{Proposition}
\newtheorem{corollary}[theorem]{Corollary}
\newtheorem{lemma}[theorem]{Lemma}
\theoremstyle{definition}
\newtheorem{definition}[theorem]{Definition}
\newtheorem{remark}[theorem]{Remark}
\newcommand{\enm}[1]{\ensuremath{#1}}          %
\newcommand{\op}[1]{\operatorname{#1}}
\newcommand{\cal}[1]{\mathcal{#1}}
\newcommand{\CC}{\enm{\mathbb{C}}}
\newcommand{\PP}{\enm{\mathbb{P}}}
\newcommand{\Oo}{\enm{\cal{O}}}
\newcommand{\Ww}{\enm{\cal{W}}}
\renewcommand{\phi}{\varphi}
\renewcommand{\theta}{\vartheta}
\renewcommand{\epsilon}{\varepsilon}
\newcommand{\Pic}{\op{Pic}}
\newcommand{\Ext}{\op{Ext}}
\newcommand{\old}[1]{}
\begin{document}

%\layout
\title[Moduli of Stable Sheaves on a Smooth Quadric]{Moduli of Stable Sheaves\\
on a Smooth Quadric in $\PP_3$}
\author{Sukmoon Huh}
\address{CIRM \\
Fondazione Bruno Kessler \\
Via Sommarive, 14-Povo\\
38100-Trento}
\email{sukmoon.huh@math.unizh.ch}
\keywords{moduli, quadric, stable sheaf, Brill-Noether loci}
%\thanks{??}
\subjclass[msc2000]{Primary: {14D20}; Secondary: {14E05}}
\begin{abstract}
We prove that the moduli space of stable sheaves of rank 2 with a certain Chern classes on a smooth quadric $Q$ in $\PP_3$, is isomorphic to $\PP_3$. Using this identification, we give a new proof that a certain Brill-Noether locus on a non-hyperelliptic curve of genus 4, is isomorphic to the \textit{Donagi-Izadi cubic threefold}.
\end{abstract}

\maketitle
%\tableofcontents

\section{Introduction}
In \cite{OPP}, the geometry of the Brill-Noether loci $\Ww^r$ of the
moduli spaces $SU_C(2,K_C)$ of semi-stable vector bundles of rank 2
with canonical determinant over curves with small genus, was investigated. In \cite{Huh}, the explicit description of $\Ww^1$ when
$g(C)=3$, was rediscovered, using the rational map from the moduli
space $\overline{M}(1,2)$ of stable sheaves of rank 2 with Chern
classes (1,2) over $\PP_2$ to $\Ww^1$, defined by the restriction. In
this article, we use the same method as in \cite{Huh} to rediscover
the geometry of $\Ww^2$ stated in \cite{OPP}, using the rational
restriction map
$$\Phi : \overline{M}(2) \dashrightarrow \Ww^2,$$
sending $E$ to $E|_C$, where $\overline{M}(k)$ is the moduli space of semi-stable sheaves of
rank 2 with the Chern classes $c_1=(1,1)$ and $c_2=k$ over a smooth
quadric surface $Q\subset \PP_3$, containing $C$, and $\Ww^2$ is a Brill-Noether
locus of a non-hyperelliptic curve of genus 4. This rational map
makes sense because $C$ is canonically embedded into $\PP_3$ and
there exists the unique quadric containing it. We will deal with the case when $Q$ is smooth.

First, we give the explicit description of $\overline{M}(2)$. We construct a morphism $\Psi: \overline{M}(2) \rightarrow \PP_3$, sending $E$ to the point $p_E$ which is passed by the lines containing zeros of the sections of $E$. In fact, $\Psi$ can be proven to be an isomorphism.

Second, we study the jumping conics of $E\in \overline{M}(2)$. In fact, the isomorphism $\Psi$ can be redefined by sending $E$ to the set of jumping conics of $E$, which is a hyperplane in $\PP_3^*$.

Last, we investigate the restriction map $\Phi : \overline{M}(2) \dashrightarrow \Ww^2$. We prove that this map is birational and given by the complete linear system $|I_C(3)|$. This will give us an easy proof that $\Ww^2$ is isomorphic to the \textit{Donagi-Izadi cubic threefold}. If we composite $\Phi$ with the projection $\Ww^2 \dashrightarrow \PP_3^*$ at the unique point of $\Ww^3$, we have an isomorphism of $\overline{M}(2)$ with $\PP_3^*$. We also describe this isomorphism in terms of sheaves in $\overline{M}(2)$.

The work in this article was done during my stay at the CIRM in Trento. I am grateful to the institution for the hospitality and support.

\section{Description of $\overline{M}(2)$}
Let $Q$ be a smooth quadric in $\PP_3$ over $\CC$ and $\overline{M}(k)$ be the moduli space of semi-stable sheaves of rank 2 with the Chern classes $c_1=\Oo_Q(1,1)$ and $c_2=k$ on $Q$ with respect to the ample line bundle $H=\Oo_Q(1,1)$.
We know, by the Bogomolov theorem, that $\overline{M}(k)=\emptyset$ for $k\leq 0$. For the case when $k=1$, the following statement can be easily derived.

\begin{lemma}\label{1}
$\overline{M}(1)$ is the one-point space with the strictly semi-stable bundle,
$$E_0=\Oo_Q(1,0)\oplus \Oo_Q(0,1).$$
\end{lemma}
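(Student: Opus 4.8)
The plan is to show that \emph{every} semi-stable sheaf $E$ with $c_1=\Oo_Q(1,1)$ and $c_2=1$ is isomorphic to $E_0$; since $E_0$ is visibly strictly semi-stable (its summands $\Oo_Q(1,0)$ and $\Oo_Q(0,1)$ each have slope $1=\mu(E_0)$), this forces the moduli space to consist of the single S-equivalence class $[E_0]$, and in particular shows that no stable sheaf exists. Throughout I identify $Q\cong\PP_1\times\PP_1$, so that $\Pic(Q)=\ZZ^2$ with intersection pairing $(a,b)\cdot(c,d)=ad+bc$; with respect to $H=\Oo_Q(1,1)$ the slope of $E$ is $\mu(E)=\frac{1}{2}\,c_1\cdot H=1$, and a saturated rank-$1$ subsheaf $\Oo_Q(a,b)\hookrightarrow E$ destabilizes $E$ as soon as $a+b\geq 1$.

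First I would produce a global section of $E$. A Riemann--Roch computation (or simply evaluation on $E_0$, since $\chi$ depends only on the Chern classes) gives $\chi(E)=4$. Serre duality identifies $h^2(E)$ with $\dim\Hom(E,K_Q)$, where $K_Q=\Oo_Q(-2,-2)$; any nonzero such homomorphism would exhibit a rank-$1$ quotient of $E$ contained in $\Oo_Q(-2,-2)$, hence of slope $\leq -4<1=\mu(E)$, contradicting semi-stability. Thus $h^2(E)=0$ and $h^0(E)\geq\chi(E)=4>0$.

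Next I would exploit a nonzero section $s\colon\Oo_Q\to E$. Its saturation is a line subbundle $\Oo_Q(D)\hookrightarrow E$ with $D$ effective, and semi-stability forces $D\cdot H\leq 1$; since $D$ is effective on $\PP_1\times\PP_1$ this leaves only $D\in\{0,(1,0),(0,1)\}$. If $D=(1,0)$ (the case $(0,1)$ being symmetric), the Chern classes force the torsion-free quotient $E/\Oo_Q(1,0)$ to be exactly $\Oo_Q(0,1)$, and the resulting extension splits because $\Ext^1(\Oo_Q(0,1),\Oo_Q(1,0))=H^1(Q,\Oo_Q(1,-1))=0$; hence $E\cong E_0$. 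If $D=0$, then $s$ vanishes in codimension $2$ and $c_2=1$ forces its zero scheme to be a single reduced point $p$, giving $0\to\Oo_Q\to E\to I_p(1,1)\to 0$. A Serre-construction computation yields $\Ext^1(I_p(1,1),\Oo_Q)\cong\CC$; the split class is excluded because $I_p(1,1)$ has slope $2>1$ and would destabilize $E$, so $E$ is the unique non-split extension. Finally I would verify that $E_0$ itself realizes this non-split extension: the inclusion $\Oo_Q\hookrightarrow E_0$ arising from a general section admits no retraction, since $\Hom(E_0,\Oo_Q)=H^0(\Oo_Q(-1,0))\oplus H^0(\Oo_Q(0,-1))=0$, so it is non-split and therefore $E\cong E_0$ in this case as well.

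The step I expect to be the main obstacle is the extension analysis when $D=0$: one must compute $\Ext^1(I_p(1,1),\Oo_Q)$ correctly and then identify its unique non-split class with $E_0$, ruling out a genuinely new (and possibly stable) sheaf. The vanishing $\Hom(E_0,\Oo_Q)=0$ is precisely the input that closes this gap. Once every semi-stable $E$ is shown to be isomorphic to $E_0$, the space $\overline{M}(1)$ is set-theoretically the single point $[E_0]$; its reducedness can be confirmed by a separate deformation (tangent-space) computation, though the classification above already pins down the underlying space.
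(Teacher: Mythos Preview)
Your argument is correct and reaches the same conclusion as the paper, but the organization is genuinely different. The paper splits into two cases, \emph{stable} versus \emph{strictly semi-stable}: in the stable case it produces the extension $0\to\Oo_Q\to E\to I_p(1,1)\to 0$ and then twists by $\Oo_Q(-1,0)$ to find $h^0(E(-1,0))=1$, so $\Oo_Q(1,0)\hookrightarrow E$ contradicts stability; in the strictly semi-stable case it observes directly that the Jordan--H\"older filtration has factors $\Oo_Q(1,0),\Oo_Q(0,1)$ and that $\Ext^1$ between them vanishes. You instead run a single case analysis on the saturation $\Oo_Q(D)$ of a section, and in the delicate case $D=0$ you go one step further than the paper: rather than merely showing the extension is not stable, you compute $\Ext^1(I_p(1,1),\Oo_Q)\cong\CC$ and identify the unique non-split class with $E_0$ via $\Hom(E_0,\Oo_Q)=0$. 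This buys you a uniform treatment (no stable/semi-stable dichotomy) and makes explicit that the non-split extension really is $E_0$, whereas the paper's twist-and-contradict argument is shorter but leaves that identification implicit. Your cases $D=(1,0),(0,1)$ are exactly the paper's strictly semi-stable case.
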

\begin{proof}
Let $E\in \overline{M}(1)$ be a stable bundle and then, from $\chi(E)=4$ and the stability condition, we have
$$0\rightarrow \Oo_Q \rightarrow E \rightarrow I_p(1,1) \rightarrow 0,$$
where $I_p$ is the ideal sheaf of a point $p\in Q$. If we tensor the sequence with $\Oo_Q(-1,0)$, it can be shown that $h^0(E(-1,0))=1$, which is a contradiction to the stability of $E$. If $E$ is strictly semi-stable, it should be fitted into the following sequence,
$$0\rightarrow \Oo_Q(a,1-a) \rightarrow E \rightarrow \Oo_Q(1-a,a) \rightarrow 0,
$$
where $a=0$ or $1$. Since $\Ext^1(\Oo_Q(a,1-a),\Oo_Q(1-a,a))$ is trivial, so $E$ must be the direct sum of $\Oo_Q(1,0)$ and $\Oo_Q(0,1)$.
\end{proof}

Now let us deal with the case $k=2$. Note that the stability and semi-stability conditions are equivalent. In particular, $\overline{M}(2)$ is a projective space whose points correspond to isomorphism classes of stable sheaves on $Q$ with given numerical invariants. From the standard computation, we can have $\Ext^2(E,E)=0$ for any $E\in \overline{M}(2)$. Thus $\overline{M}(2)$ is smooth.

Let $E$ be a sheaf in $\overline{M}(2)$. For a subsheaf $\Oo_Q(a,b)$ of $E$, we have $a+b<1$ by the stability condition. Since $\chi(E)=3$ and $$h^2(E)=h^0(E^*(-2,-2))=h^0(E(-3,-3))=0,$$so we have $h^0(E)>0$.
Hence, $E$ can be obtained from the following extension,
\begin{equation}\label{ext}
0\rightarrow \Oo_Q \rightarrow E \rightarrow I_Z(1,1) \rightarrow 0,
\end{equation}
where $Z$ is a zero-dimensional subscheme of $Q$ with length 2. In particular, we have $h^0(E)=3$ and $h^1(E)=0$. Assume that there exists a line $l$ on $Q$ containing $Z$ with the ideal sheaf $\Oo_Q(-1,0)$. If we tensor the sequence (\ref{ext}) with $\Oo_Q(0,-1)$, it can be easily checked that $h^0(E(0,-1))=0$ contradicting to the stability of $E$. Thus, the line $l$ containing $Z$ should intersect with $Q$ only at $Z$.

\begin{lemma}
The sheaves in the extension (\ref{ext}), are all stable if $Z$ is not contained in any line on $Q$.
\end{lemma}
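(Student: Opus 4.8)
The plan is to verify $\mu$-stability directly with respect to the ample class $H=\Oo_Q(1,1)$. Since $c_1(E)=\Oo_Q(1,1)$, one computes $\mu(E)=\tfrac12\,c_1(E)\cdot H=1$, whereas a line bundle $\Oo_Q(a,b)$ has slope $\Oo_Q(a,b)\cdot H=a+b$. The sheaf $E$ is torsion-free, being an extension of the torsion-free sheaves $I_Z(1,1)$ and $\Oo_Q$, and $Q\cong\PP_1\times\PP_1$ is a smooth factorial surface, so every saturated rank-one subsheaf is a line bundle, while an arbitrary rank-one subsheaf has slope at most that of its saturation. Hence stability reduces to showing that no inclusion $\Oo_Q(a,b)\into E$ exists with $a+b\geq 1$ (note $a+b<1$ means $a+b\le 0$ as $a+b\in\ZZ$). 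I would assume such an inclusion exists and aim for a contradiction.

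First I would compose $\Oo_Q(a,b)\into E$ with the projection $E\to I_Z(1,1)$ from (\ref{ext}) and distinguish two cases. If the composite vanishes, the inclusion factors through the kernel $\Oo_Q$, giving a nonzero map $\Oo_Q(a,b)\to\Oo_Q$; this forces $a\le 0$ and $b\le 0$, so $a+b\le 0$, contradicting $a+b\ge 1$. If the composite is nonzero, then post-composing with $I_Z(1,1)\into\Oo_Q(1,1)$ produces a nonzero map $\Oo_Q(a,b)\to\Oo_Q(1,1)$, whence $H^0(\Oo_Q(1-a,1-b))\neq 0$ and therefore $a\le 1$ and $b\le 1$. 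Combined with $a+b\ge 1$, this leaves only the three possibilities $(a,b)\in\{(1,1),(1,0),(0,1)\}$.

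To conclude, I would twist the nonzero map $\Oo_Q(a,b)\to I_Z(1,1)$ by $\Oo_Q(-1,-1)$ in each remaining case. For $(1,1)$ this yields a nonzero section of $I_Z$, which is impossible since $H^0(I_Z)=0$ for the nonempty $Z$ (only the zero constant vanishes on $Z$). For $(1,0)$ and $(0,1)$ it yields a nonzero section of $I_Z(0,1)$, respectively $I_Z(1,0)$; such a section cuts out an effective divisor of class $(0,1)$ (resp. $(1,0)$) through $Z$, that is, a line of one of the two rulings of $Q$ containing $Z$. This is exactly what the hypothesis forbids, so all three cases collapse and $E$ is stable.

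The argument is short, and the only point genuinely requiring the geometry of $Q$ is the step I expect to be the main (if mild) obstacle: translating the non-vanishing of $H^0(I_Z(1,0))$ and $H^0(I_Z(0,1))$ into the existence of a line through $Z$, i.e.\ recognizing that the effective divisors of class $(1,0)$ and $(0,1)$ are precisely the lines of the two rulings. Together with the verification that saturated rank-one subsheaves on the factorial surface $Q$ are invertible, this is where one must be careful, but both facts are standard for the smooth quadric.
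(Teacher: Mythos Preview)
Your argument is correct and follows essentially the same route as the paper's proof: constrain a potential destabilizing sub-line-bundle $\Oo_Q(a,b)\hookrightarrow E$ by composing with the quotient map to $I_Z(1,1)$ from the extension, reduce to the cases $(a,b)\in\{(1,0),(0,1)\}$, and rule these out because they would force $Z$ to lie on a line of one of the two rulings. The paper's proof is simply a terse summary of this reasoning, stating only the resulting list of possible sub-bundles and that the hypothesis on $Z$ excludes the problematic ones; you have filled in the details (saturation, the case $(1,1)$, the translation of $H^0(I_Z(1,0))\neq 0$ into a ruling line through $Z$) that the paper leaves implicit.
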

\begin{proof}
From the condition on the numerical invariants of $E$, the only possibility of the sub-bundle $\Oo_Q(a,b)\subset E$ is $\Oo_Q$ or $\Oo_Q(a,1-a)$ with $a=0,1$. The second case is impossible due to the condition on $Z$.
\end{proof}

Note that $\PP(Z):=\PP \Ext^1(I_Z(1,1), \Oo_Q)\simeq \PP_1$. From the isomorphism
$$\Ext^1(I_Z(1,1), \Oo_Q)\simeq \Oo_{z_1}\oplus \Oo_{z_2},$$
we can denote $(c_1,c_2)$ for the coordinates of $\PP(Z)$. As in the case of the projective plane \cite{Hulek}, we can prove the following lemma.

\begin{lemma}
An extension $(c_1,c_2)$ gives a bundle if and only if all $c_i\not=0$.
\end{lemma}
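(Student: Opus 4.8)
The plan is to use a Serre-type correspondence between the extension class and the non-locally-free locus, realized through sheaf $\mathcal{E}xt$'s. Since $Z$ is a length-$2$ local complete intersection in the smooth surface $Q$ not lying on a line, I would first record the sheaves $\mathcal{E}xt^q(I_Z(1,1),\Oo_Q)$. Dualizing $0\to I_Z\to \Oo_Q\to \Oo_Z\to 0$ gives $\mathcal{E}xt^0(I_Z,\Oo_Q)=\Oo_Q$ and $\mathcal{E}xt^1(I_Z,\Oo_Q)\simeq \mathcal{E}xt^2(\Oo_Z,\Oo_Q)\simeq \Oo_Z$, the latter a length-$2$ skyscraper, while $\mathcal{E}xt^{\geq 2}(I_Z,\Oo_Q)=0$; twisting by $(-1,-1)$ does not change the support. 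The local-to-global spectral sequence then degenerates because $H^1(Q,\Oo_Q(-1,-1))=0$ by K\"unneth, so the edge map
$$\Ext^1(I_Z(1,1),\Oo_Q)\ \xrightarrow{\ \sim\ }\ H^0\bigl(\mathcal{E}xt^1(I_Z(1,1),\Oo_Q)\bigr)\simeq \Oo_{z_1}\oplus\Oo_{z_2}$$
is an isomorphism. This is precisely the identification supplying the coordinates $(c_1,c_2)$, and it shows that an extension class is recorded by its pair of values at the two points of $Z$.

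Next I would apply $\mathcal{H}om(-,\Oo_Q)$ to the extension (\ref{ext}) itself. Using $\mathcal{E}xt^1(\Oo_Q,\Oo_Q)=0$ and the vanishing $\mathcal{E}xt^2(I_Z(1,1),\Oo_Q)=0$ just obtained, the long exact sequence of sheaf $\mathcal{E}xt$'s collapses to
$$\Oo_Q \xrightarrow{\ \delta\ } \mathcal{E}xt^1(I_Z(1,1),\Oo_Q)\longrightarrow \mathcal{E}xt^1(E,\Oo_Q)\longrightarrow 0,$$
and the same sequence simultaneously forces $\mathcal{E}xt^2(E,\Oo_Q)=0$. Since $E$ is torsion-free of rank $2$ on the smooth surface $Q$, it is locally free precisely when $\mathcal{E}xt^1(E,\Oo_Q)=0$; by the displayed sequence this holds if and only if the connecting map $\delta$ is surjective.

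The remaining point, which is the technical heart of the argument, is to identify $\delta$. The connecting homomorphism in this $\mathcal{E}xt$ sequence is Yoneda multiplication by the class $e\in\Ext^1(I_Z(1,1),\Oo_Q)$ of the extension; hence $\delta\colon\Oo_Q\to\Oo_Z$ sends the local section $1$ to the image of $e$ under the edge isomorphism above, namely to $(c_1,c_2)\in\Oo_{z_1}\oplus\Oo_{z_2}$, and is $\Oo_Q$-linear. Its image is therefore the $\Oo_Q$-submodule $\set{(f(z_1)c_1,\,f(z_2)c_2)\mid f\in\Oo_Q}$ generated by $(c_1,c_2)$. As $z_1\ne z_2$, the values $f(z_1),f(z_2)$ vary independently, so this submodule is all of $\Oo_{z_1}\oplus\Oo_{z_2}$ if and only if $c_1\ne 0$ and $c_2\ne 0$; if some $c_i=0$ the image is a proper summand and $\mathcal{E}xt^1(E,\Oo_Q)$ is nonzero at $z_i$. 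Combining with the previous paragraph, $E$ is a bundle exactly when all $c_i\ne 0$.

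I expect the main obstacle to be the clean identification of $\delta$ with multiplication by $(c_1,c_2)$, that is, checking the compatibility of the Yoneda description of the connecting map with the spectral-sequence edge isomorphism, together with the bookkeeping needed to cover a non-reduced $Z$. In that case $\Oo_Z$ is a length-$2$ local ring rather than a direct sum, and the criterion ``all $c_i\ne 0$'' must be reinterpreted as the section $e$ generating the stalk of $\mathcal{E}xt^1$, i.e. being a unit in $\Oo_Z$; the surjectivity-of-$\delta$ argument is identical, but the translation into the homogeneous coordinates on $\PP(Z)\simeq\PP_1$ requires care.
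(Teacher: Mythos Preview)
Your argument is correct and is precisely the Serre--correspondence computation that the paper defers to \cite{Hulsbergen}: the paper's own proof consists of the single sentence ``An easy application of \cite{Hulsbergen},'' and what you have written is the standard unpacking of that reference via the local-to-global spectral sequence and the identification of the connecting map in the sheaf $\mathcal{E}xt$ sequence with cup product by the extension class. Your caveat about non-reduced $Z$ is well taken, though the paper's notation $\Oo_{z_1}\oplus\Oo_{z_2}$ already implicitly restricts to the reduced case.
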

\begin{proof}
An easy application of \cite{Hulsbergen}.
\end{proof}

\begin{lemma}\label{non}
The set of non-bundles in $\overline{M}(2)$ is parametrized by $Q\subset \PP_3$.
\end{lemma}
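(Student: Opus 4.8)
The plan is to identify each non-bundle through its reflexive hull and then invoke Lemma~\ref{1}. Let $E\in\overline{M}(2)$ be a sheaf that is not locally free. Since $Q$ is a smooth surface, the double dual $E^{**}$ is locally free, and the canonical map gives an exact sequence
\begin{equation}\label{dd}
0\rightarrow E\rightarrow E^{**}\rightarrow T\rightarrow 0,
\end{equation}
with $T\neq 0$ a torsion sheaf supported on the finite non-locally-free locus of $E$. Because (\ref{dd}) is an isomorphism in codimension one, $E^{**}$ again has $c_1=\Oo_Q(1,1)$, and $c_2(E)=c_2(E^{**})+\length(T)$.

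First I would establish that $E^{**}$ is semi-stable with $c_2(E^{**})=1$. Suppose $\Oo_Q(a,b)\subset E^{**}$ were a sub-line-bundle of slope $a+b\geq 2$. Saturating its image inside the locally free sheaf $E^{**}$ and intersecting with $E$ produces a rank-one subsheaf of $E$ of slope at least $a+b\geq 2>1=\mu_H(E)$, contradicting the slope-semi-stability of the stable sheaf $E$ (recall that any sub-line-bundle $\Oo_Q(a,b)\subset E$ satisfies $a+b<1$). Hence every sub-line-bundle of $E^{**}$ has slope at most $1=\mu_H(E^{**})$, so $E^{**}$ is semi-stable and the Bogomolov theorem gives $c_2(E^{**})\geq 1$. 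On the other hand $T\neq 0$ forces $\length(T)\geq 1$, whence $c_2(E^{**})=c_2(E)-\length(T)\leq 1$. Therefore $c_2(E^{**})=1$ and $\length(T)=1$, so $T=\Oo_p$ for a single reduced point $p\in Q$, and $E^{**}\in\overline{M}(1)$; by Lemma~\ref{1}, $E^{**}=E_0=\Oo_Q(1,0)\oplus\Oo_Q(0,1)$.

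Next I would run the construction in reverse and check that $p$ is a complete invariant. For a fixed point $p$, the kernels of the surjections $E_0\onto\Oo_p$ are parametrized by $\PP\Hom(E_0,\Oo_p)\simeq\PP_1$, on which $\Aut(E_0)=(\CC^*)^2$ acts with three orbits. The two fixed points $[1:0]$ and $[0:1]$ yield the decomposable kernels $\Oo_Q(1,0)\otimes I_p\oplus\Oo_Q(0,1)$ and $\Oo_Q(1,0)\oplus\Oo_Q(0,1)\otimes I_p$; each contains a sub-line-bundle of slope $1$ with larger reduced Hilbert polynomial, so neither is Gieseker-semi-stable and both are excluded from $\overline{M}(2)$. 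On the open orbit all kernels are isomorphic to a single sheaf $E_p$, which one checks to be stable: it contains no sub-line-bundle of slope $1$, while its rank-one subsheaves of slope $1$ all have strictly smaller reduced Hilbert polynomial than $E_p$. Thus every $p\in Q$ yields exactly one isomorphism class of stable non-bundle.

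Combining the two directions, $E\mapsto p=\Supp(E^{**}/E)$ and $p\mapsto E_p$ are mutually inverse bijections between the non-bundles in $\overline{M}(2)$ and the points of $Q$, which is the asserted parametrization. I expect the crux to be the semi-stability of $E^{**}$: transferring a putative destabilizing sub-line-bundle of $E^{**}$ to a slope-violating subsheaf of $E$ while keeping track of its first Chern class through the saturation. Once this is secured, it is the orbit count under $\Aut(E_0)$ that pins the non-bundle locus down to $Q$ itself rather than to a $\PP_1$-bundle over $Q$.
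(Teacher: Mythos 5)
Your proof is correct and follows essentially the same route as the paper: pass to the double dual, identify $E^{**}\simeq E_0$ via Lemma~\ref{1}, and classify the kernels of the surjections $E_0\onto \Oo_p$ parametrized by $\PP_1$, discarding the two decomposable (non-semi-stable) ones. The only differences are minor: you justify the semi-stability of $E^{**}$ and $\length(T)=1$ explicitly where the paper merely asserts them, and you get uniqueness of the stable kernel from the open $\Aut(E_0)$-orbit on $\PP_1$, whereas the paper deduces it from $\dim\Ext^1(\Oo_Q(0,1),I_{p}(1,0))=1$.
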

\begin{proof}
Let $E$ be a non-bundle in $\overline{M}(2)$, then we have the following exact sequence,
\begin{equation}
0\rightarrow E \rightarrow E^{**} \rightarrow \Oo_Z \rightarrow 0,
\end{equation}
where $Z$ is a zero-dimensional subscheme of $Q$. Since $E^{**}$ is semi-stable, we have $E^{**}\simeq E_0=\Oo_Q(1,0)\oplus \Oo_Q(0,1)$ by [lemma\ref{1}]. In particular, the length of $Z$ is 1.

Now let $f: E_0\twoheadrightarrow \Oo_{p_E}$ be a surjection with the kernel $E$, where $p_E$ is a point in $Q$. We can denote $f$ by $(f_1,f_2)$, where $f_i : \Oo_Q(i-1,i) \rightarrow \Oo_{p_E}$, i.e. the parametrization of such surjections is $\PP_1$ with the coordinates $(f_1,f_2)$. If $f_1=0$, then $\ker (f)$ is decomposed to $\Oo_Q(1,0)\oplus I_{p_E}(0,1)$, which is not stable. Similarly, we have an unstable kernel when $f_2=0$. Let us assume that $f_i\not=0$ for all $i$. Then we have the following diagram,

\begin{equation}
\xymatrix{ & 0 \ar[d] & 0 \ar[d] & 0 \ar[d] \\
             0 \ar[r] & I_{p_E}(1,0) \ar[d] \ar[r]
             &E \ar[d] \ar[r] &
             \Oo_Q(0,1) \ar[d] \ar[r] & 0\\
             0 \ar[r] &\Oo_Q(1,0) \ar[d] \ar[r]^{s} & E_0
                          \ar[d] \ar[r] & \Oo_Q(0,1) \ar[d] \ar[r] &0\\
             0\ar[r]&\Oo_{p_E} \ar[d]\ar[r] & \Oo_{p_E} \ar[d] \ar[r] & 0  \\
             &0& 0,}
\end{equation}
where $E$ is a non-trivial extension. It is easily checked that this non-trivial sheaf is stable. Since the dimension of $\Ext^1(\Oo_Q(0,1), I_{p_E}(1,0))$ is 1, the non-trivial extension $E$ is uniquely determined, i.e. we have only one non-bundle on $Q$ associated to a point $p_E\in Q$. Hence, we get the assertion.
\end{proof}

Now let $E$ be a stable bundle in $\overline{M}(2)$ and $s$ be a section of $E$.
From $s$, we have an exact sequence of type (\ref{ext}) and so can consider a morphism
$$\psi_E : \PP H^0(E)\simeq \PP_2 \rightarrow Gr(1,3)\subset \PP_5,$$
sending a section $s$ of $E$ to the line containing $Z$ in $\PP_3$.

\begin{remark}
In the previous proof, if we choose a section of $E_0$ whose zero is $q\not= p_E$, then we have an exact sequence
$$0\rightarrow \Oo_Q \rightarrow E \rightarrow I_Z(1,1) \rightarrow 0,$$
where $Z=\{p_E,q\}$. In particular, for a non-bundle $E$, the image of $\phi_E$ is the plane in $Gr(1,3)$ whose points correspond to lines passing through the singularity point $p_E$.
\end{remark}

Let $E$ be a stable bundle in $\overline{M}(2)$ with the exact sequence (\ref{ext}).
\begin{lemma}\label{inj}
The determinant map
$$\lambda_E : \wedge^2H^0(E) \rightarrow H^0(\Oo_Q(1,1)),$$
is injective.
\end{lemma}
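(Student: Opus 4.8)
The plan is to exploit the numerical fact that $H^0(E)$ is only three-dimensional. Then $\wedge^2 H^0(E)$ is three-dimensional as well, while $H^0(\Oo_Q(1,1))\cong H^0(\Oo_{\PP_1}(1))\tensor H^0(\Oo_{\PP_1}(1))$ is four-dimensional, so the asserted injectivity is exactly the statement that $\ker\lambda_E=0$. First I would recall the linear-algebra fact that on a three-dimensional vector space every nonzero element of the second exterior power is decomposable, since the cone of decomposables is all of $\wedge^2$ in dimension $3$. Consequently a hypothetical nonzero element of $\ker\lambda_E$ can be written as $s\wedge t$ with $s,t\in H^0(E)$ linearly independent, and the assumption $\lambda_E(s\wedge t)=0$ says precisely that $s\wedge t$ vanishes identically as a section of $\det E=\Oo_Q(1,1)$.

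Next I would translate this vanishing into a statement about subsheaves. Since $Q$ is integral and $\det E$ is a line bundle, the identity $s\wedge t=0$ forces $s$ and $t$ to be linearly dependent over the function field of $Q$, so the image of the evaluation map $\Oo_Q^{\,2}\xrightarrow{(s,t)}E$ is a rank-one subsheaf $F\subset E$. Saturating $F$ inside $E$ yields a line bundle $M=\Oo_Q(a,b)\subset E$. Because $H^0$ is left exact the inclusion $M\into E$ induces an injection $H^0(M)\into H^0(E)$, and as $s,t$ factor through $F\subset M$ they remain linearly independent in $H^0(M)$; hence $h^0(\Oo_Q(a,b))\geq 2$.

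The contradiction then comes from confronting this with stability. Stability of $E$ with respect to $H=\Oo_Q(1,1)$ gives $a+b<1$ for any line subsheaf $\Oo_Q(a,b)$, exactly as recorded before sequence (\ref{ext}). On the other hand $h^0(\Oo_Q(a,b))\geq 2$ on $Q\cong\PP_1\times\PP_1$ forces $a,b\geq 0$ and $(a+1)(b+1)\geq 2$, whence $a+b\geq 1$. This is impossible, so no nonzero element of $\ker\lambda_E$ exists and $\lambda_E$ is injective.

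I expect the only delicate point to be the passage from the scalar identity $s\wedge t=0$ in $H^0(\det E)$ to the destabilizing line bundle $M$ carrying two independent sections: one must check that the generically rank-one image sheaf $F$ saturates to an honest line bundle on the smooth surface $Q$ and that independence of $s,t$ is preserved under saturation. The decomposability input in $\wedge^2$ of a three-dimensional space and the closing numerical clash with stability are routine once this step is secured.
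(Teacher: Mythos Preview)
Your proof is correct and follows essentially the same strategy as the paper's: use decomposability in $\wedge^2$ of a three-dimensional space to produce two independent sections generating a rank-one subsheaf with $h^0\geq 2$, then obtain a contradiction from stability. The only difference is cosmetic---you finish with the clean numerical clash $a+b<1$ (stability) versus $a+b\geq 1$ (forced by $h^0(\Oo_Q(a,b))\geq 2$), whereas the paper phrases the same contradiction through the equivalent geometric condition that the zero locus $Z$ in (\ref{ext}) lies on no line of $Q$.
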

\begin{proof}
Let us assume that $\lambda_E$ is not injective. Since every element in $\wedge^2 H^0(E)$ is decomposable, there exist two sections $s_1$ and $s_2$ of $E$ such that $s_1\wedge s_2$ is a non-trivial element in $\ker(\lambda_E)$. Then $s_1$ and $s_2$ generate a subsheaf $F$ of $E$ with $h^0(F)\geq 2$. Hence, $F$ is of the form $I_{Z'}(a,b)$, where $Z'$ is a 0-cycle on $Q$ and $a,b\geq 0$. From the stability condition of $E$, we have $F\simeq \Oo_Q(a,1-a)$ with $a=0$ or $1$, which is not possible since $Z$ is not contained in any line on $Q$.
\end{proof}

Let $p_E$ be the point in $\PP_3\simeq \PP H^0(\Oo_Q(1,1))^*$ corresponding to the cokernel of $\lambda$ and consider the following exact sequence,
$$0\rightarrow H^0(I_Z(1,1))\rightarrow H^0(\Oo_Q(1,1))\rightarrow H^0(\Oo_Z)\rightarrow 0.$$
In the previous lemma, $H^0(E)$ can be expressed as the direct sum of $H^0(\Oo_Q)$ and $H^0(I_Z(1,1))$, which would give the following identification,
\begin{equation}
\wedge^2 H^0(E) \simeq [H^0(\Oo_Q)\otimes H^0(I_Z(1,1))]\oplus [\wedge^2 H^0(I_Z(1,1))].
\end{equation}
From this identification, clearly we have,
\begin{equation}
H^0(I_Z(1,1))\subset \lambda_E(\wedge^2H^0(E)).
\end{equation}
In other words, the dual of the cokernel of $\lambda$ is contained in $H^0(\Oo_Z)^*$. Note that $\PP H^0(\Oo_Z)^*$ is a line in $\PP_3$ passing through $Z$. Thus the line passing through $Z$, also contains $p_E$. With the previous remark, we get the following lemma.

\begin{lemma}
$\psi_E$ is a linear embedding of $\PP H^0(E)$ into $\PP_5$. Moreover, the image corresponds to the set of lines passing through one point $p_E$ in $\PP_3$.
\end{lemma}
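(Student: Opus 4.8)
The plan is to show that, under the natural identifications, $\psi_E$ is nothing but the identity map of $\PP H^0(E)$, so that it is automatically a linear isomorphism onto its image. Write $V=H^0(\Oo_Q(1,1))$, so that $h^0(\Oo_Q(1,1))=4$ and $\PP_3=\PP V^*$. By Lemma \ref{inj} the determinant map $\lambda_E\colon\wedge^2H^0(E)\to V$ is injective; since $\dim\wedge^2H^0(E)=3$ its image is a hyperplane of $V$, and $p_E$ is by construction the point of $\PP V^*$ defined by the one-dimensional cokernel. Unwinding the duality, a linear form $f\in V$ is a coordinate on $\PP V^*$ and the cokernel is evaluation at $p_E$, so the image of $\lambda_E$ is exactly $H_{p_E}:=\{f\in V: f(p_E)=0\}$ and $\lambda_E$ is an isomorphism onto $H_{p_E}$. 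By the computation just before the statement, applied to each section (the point $p_E$ not depending on the chosen section), every nonzero $s\in H^0(E)$ has a length-$2$ zero scheme $Z_s$ whose spanning line $\ell_s=\psi_E([s])$ passes through $p_E$; hence $\psi_E$ is a morphism into the locus $\Sigma_{p_E}\subset Gr(1,3)$ of lines through $p_E$.

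First I would record that $\Sigma_{p_E}$ is a linearly embedded $\PP_2$ inside the Plücker $\PP_5=\PP(\wedge^2 V)$: a line $\ell$ is determined by the two-dimensional space $V_\ell\subset V$ of linear forms vanishing on it, and $\ell\ni p_E$ exactly when $V_\ell\subset H_{p_E}$; thus $\Sigma_{p_E}\cong Gr(2,H_{p_E})\cong\PP(H_{p_E}^*)\cong\PP_2$, and this parametrization is the restriction of the Plücker embedding, hence linear.

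The crux is to express $V_{\ell_s}$ through $s$. From the extension $0\to\Oo_Q\xrightarrow{s}E\to I_{Z_s}(1,1)\to 0$ I obtain a splitting $H^0(E)=\langle s\rangle\oplus H^0(I_{Z_s}(1,1))$ with $h^0(I_{Z_s}(1,1))=2$, and for $t\in H^0(E)$ with image $\bar t\in H^0(I_{Z_s}(1,1))$ one checks $\lambda_E(s\wedge t)=\bar t$. Therefore $\lambda_E(s\wedge H^0(E))=H^0(I_{Z_s}(1,1))$, which is precisely the space of linear forms vanishing on $Z_s$, i.e. $V_{\ell_s}$ (a form vanishing at the two points of $Z_s$ vanishes on all of $\ell_s$). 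Pulling back along the isomorphism $\lambda_E$, the plane $V_{\ell_s}\subset H_{p_E}$ corresponds to $s\wedge H^0(E)\subset\wedge^2 H^0(E)$.

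It then remains a purely linear-algebra step. With $U=H^0(E)$ of dimension $3$, the pairing $U\otimes\wedge^2U\to\wedge^3U\cong\CC$, $u\otimes\omega\mapsto u\wedge\omega$, identifies $\wedge^2U\cong U^*$; under it $s\wedge U$ sits inside $\Ann(s)=\{\phi\in U^*:\phi(s)=0\}$ because every functional $u\mapsto u\wedge s\wedge t$ kills $s$, and equality holds by dimension. The annihilator of $\Ann(s)$ in $U$ is $\langle s\rangle$, so along the chain $\Sigma_{p_E}\cong Gr(2,H_{p_E})\xrightarrow{\lambda_E^{-1}}Gr(2,\wedge^2U)\cong\PP U$ the map $\psi_E$ sends $[s]$ to $[s]$. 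This makes $\psi_E$ a linear isomorphism onto $\Sigma_{p_E}$, proving both statements. I expect the only real obstacle to be bookkeeping: keeping the dualities between $\PP_3=\PP V^*$, the Plücker space $\PP(\wedge^2V)$ and the three-dimensional $\wedge^2H^0(E)$ mutually consistent, and pinning down cleanly the identity $\lambda_E(s\wedge t)=\bar t$ on which everything rests.
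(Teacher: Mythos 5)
Your proof is correct and follows essentially the same route as the paper: the key identity $\lambda_E(s\wedge t)=\bar t$, obtained from the splitting $H^0(E)\simeq H^0(\Oo_Q)\oplus H^0(I_{Z_s}(1,1))$, and the resulting inclusion $V_{\ell_s}=H^0(I_{Z_s}(1,1))\subset \Image(\lambda_E)$ forcing $p_E\in\ell_s$, is precisely the computation the paper carries out just before stating the lemma. Your closing linear-algebra step --- identifying $\wedge^2H^0(E)\cong H^0(E)^*$ via the wedge pairing and exhibiting $\psi_E$ as the inverse of an explicit chain of linear isomorphisms onto $\Sigma_{p_E}$ --- only makes explicit the linearity and surjectivity onto the plane of lines through $p_E$ that the paper asserts without detail, so your write-up is a faithful (indeed more complete) rendering of the paper's argument.
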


Now, let us define a map
$$\Psi : \overline{M}(2) \rightarrow \PP_3,$$
sending $E$ to $p_E$, where $p_E$ is the unique point in $\PP_3$, passed by the lines in the image of $\phi_E$.

\begin{proposition}
$\Psi: \overline{M}(2) \rightarrow \PP_3$ is an isomorphism.
\end{proposition}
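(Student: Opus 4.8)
The plan is to build an explicit two-sided inverse to $\Psi$ out of a concrete presentation of the sheaves in $\overline{M}(2)$, and to close the argument with a smoothness/dimension count. First I would check that $\Psi$ really is a morphism and not merely a map on points: the point $p_E$ is the cokernel of the determinant map $\lambda_E$ of Lemma \ref{inj}, and this construction is functorial, so carried out over the universal family on $\overline{M}(2)$ it produces a line bundle together with a classifying map, i.e. a morphism. I would also record the two facts that make the endgame cheap: $\overline{M}(2)$ is smooth (since $\Ext^2(E,E)=0$) and of dimension $3$, the latter either from $\dim\Ext^1(E,E)=4c_2-c_1^2-3\chi(\Oo_Q)=3$, or by counting the extension data $(Z,\text{class})$ in $\op{Hilb}^2(Q)\times\PP_1$ (dimension $5$) and subtracting the $2$ parameters of $\PP H^0(E)$. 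Both $\overline{M}(2)$ and $\PP_3$ are then smooth projective of dimension $3$ over $\CC$.

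The heart of the matter is to recover $E$ from $p_E$. I would first argue that a bundle $E\in\overline{M}(2)$ is globally generated: for general $x\in Q$ the unique line through $p_E$ and $x$ determines the unique section (up to scalar) vanishing at $x$, so $\op{ev}_x\colon H^0(E)\rightarrow E_x$ has one-dimensional kernel and is onto. Global generation yields $0\rightarrow K\rightarrow H^0(E)\otimes\Oo_Q\rightarrow E\rightarrow 0$ with $K$ a line bundle, and comparing first Chern classes forces $c_1(K)=-(1,1)$, whence $K\simeq\Oo_Q(-1,-1)$ because $\Pic Q=\ZZ^2$. The inclusion $\Oo_Q(-1,-1)\into\Oo_Q^{\oplus3}$ is given by three sections $f_1,f_2,f_3$ of $\Oo_Q(1,1)=\Oo_{\PP_3}(1)|_Q$, that is, by three linear forms on $\PP_3$. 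Identifying the zero scheme of a section $s=(a_1,a_2,a_3)$ with the locus where $(a_i)$ and $(f_j(x))$ are proportional, its defining $2\times2$ minors $a_if_j-a_jf_i$ are linear forms vanishing at the common zero of $f_1,f_2,f_3$; hence every line through the zeros of a section passes through that common zero, which must therefore be $p_E$. Thus $E$ is the cokernel of the map $\Oo_Q(-1,-1)\rightarrow\Oo_Q^{\oplus3}$ built from the three linear forms cutting out $p_E$, and in particular $E$ is determined by $p_E$, so $\Psi$ is injective.

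Conversely, I would define $\Theta\colon\PP_3\rightarrow\overline{M}(2)$ by sending $p$ to the cokernel $E_p$ of $\Oo_Q(-1,-1)\To{(f_1,f_2,f_3)}\Oo_Q^{\oplus3}$, where $f_1,f_2,f_3$ span the space of linear forms vanishing at $p$. This map is fibrewise injective with torsion-free cokernel, so it gives a flat family of sheaves over $\PP_3$ and hence a morphism by the (co)universal property of $\overline{M}(2)$; a Chern class computation gives $c_1(E_p)=(1,1)$ and $c_2(E_p)=2$, and $E_p$ is locally free exactly when $p\notin Q$, so over $Q$ one recovers the non-bundles of Lemma \ref{non}. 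By the previous paragraph $\Theta$ and $\Psi$ are mutually inverse on points, so $\Psi$ is bijective; since it is a bijective morphism of smooth projective varieties of the same dimension over $\CC$ (equivalently, finite and birational onto the normal $\PP_3$), Zariski's Main Theorem makes it an isomorphism, with $\Theta$ as inverse.

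The step I expect to be the main obstacle is making the reconstruction uniform: proving global generation and controlling the evaluation sequence at the special points $x$ where the line $\overline{p_Ex}$ is tangent to $Q$, and at the non-bundle locus $p_E\in Q$ where the presentation degenerates, so that both the identification of $E$ as a cokernel and the flatness of the family defining $\Theta$ hold across all of $\PP_3$. Verifying honestly that $\Psi$ is a morphism, via the universal family rather than pointwise, is the other point requiring care.
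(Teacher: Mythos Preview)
Your approach is correct and, at its core, equivalent to the paper's, though packaged differently. Both arguments construct an explicit inverse to $\Psi$: the paper, for $p\notin Q$, takes the degree-two projection $\pi_p\colon Q\to\PP_2$ and pulls back $\Omega_{\PP_2}(2)$; you instead present the inverse as the cokernel of $\Oo_Q(-1,-1)\to\Oo_Q^{\oplus3}$ given by the three linear forms cutting out $p$. These are the \emph{same} sheaf: pulling back the Euler sequence $0\to\Oo_{\PP_2}(-1)\to\Oo_{\PP_2}^{\oplus3}\to T_{\PP_2}(-1)\simeq\Omega_{\PP_2}(2)\to 0$ along $\pi_p$ gives precisely your presentation. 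What your route buys is uniformity: the cokernel description makes sense for all $p\in\PP_3$ at once, including $p\in Q$ (where it yields the non-bundle of Lemma~\ref{non}), whereas the paper treats the bundle locus $\PP_3\setminus Q$ and the non-bundle locus $Q$ in two separate steps and then invokes birationality. You also add a direct injectivity argument via global generation and the evaluation sequence, which the paper does not spell out.

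One small correction to your write-up: the phrase ``fibrewise injective'' fails exactly when $p\in Q$, since all three linear forms vanish at $p\in Q$ and the map $\Oo_Q(-1,-1)\to\Oo_Q^{\oplus3}$ drops rank there. The conclusion you want (torsion-free cokernel) still holds, because on a smooth surface $\mathcal{E}xt^1(k(x),\Oo_Q)=0$; and flatness of the family over $\PP_3$ follows from constancy of the Hilbert polynomial rather than from fibrewise injectivity. This is exactly the boundary behaviour you flagged as the main obstacle, and it resolves cleanly.
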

\begin{proof}
Let $p$ be a point in $\PP_3$. If $Z$ is a 0-cycle on $Q$ such that $p\in Z$ and the line passing through $Z$, is not contained in $Q$, then there exists $E\in \PP(Z)$ for which $p_E=p$. Thus, $\Phi$ is surjective.

Moreover, assume that $p$ is not in $Q$. If we take the projection $\pi_p: \PP_3 \dashrightarrow \PP_2$ from $p$, then the restriction map $\pi_p : Q \rightarrow \PP_2$ is a finite morphism of degree 2. Again, if we take the pull-back of $\Omega_{\PP_2}(2)$ on $Q$, we get a stable vector bundle on $Q$ with the Chern classes $c_1=\Oo_Q(1,1)$ and $c_2=2$ (this will be proved later in [lemma\ref{stable}]). Clearly, this defines an inverse map of $\Psi$. Hence, $\Psi$ is a birational morphism and it is an isomorphism over the stable vector bundles.

Since $\Phi$ is also an isomorphism over the non-bundles from [lemma\ref{non}], $\Phi$ is an isomorphism.
\end{proof}

\begin{remark}
From the identification of $\overline{M}(2)$ with $\PP_3$, we know that $\PP(Z)$ is exactly the secant line of $Q$ in $\PP_3$ passing through $Z$.
\end{remark}

\begin{remark}
Alexander Kuznetsov pointed out that the moduli space $\overline{M}(2;1,1,1)$ of stable sheaves on $\PP_3$ of rank 3 with the Chern classes $(c_1,c_2,c_3)=(1,1,1)$ is isomorphic to $\PP_3$, whose points correspond to the cokernels $E$ of $\Oo_{\PP_3} \rightarrow T_{\PP_3}(-1)$, where $T_{\PP_3}$ is the tangent bundle of $\PP_3$. Note that $h^0(\PP_3, T_{\PP_3}(-1))=4$. The restriction map from $\overline{M}(2;1,1,1) \rightarrow \overline{M}(2)$ turns out to be an isomorphism.
\end{remark}

\section{Jumping Conics}
\begin{definition}
Let $E$ be a stable sheaf in $\overline{M}(2)$ and $H$ be a hyperplane in $\PP_3$. A conic $C_H=Q\cap H$ on $Q$, is called \textit{a jumping conic} of $E$ if the splitting type of $E|_{C_H}$ is different from the generic splitting type.
\end{definition}

Assume that $E$ is locally free. If $H$ is a hyperplane, which does not contain $p_E$, then we can choose a line $l$ through $p_E$, with the unique intersection point with $C_H$, say $q$. Let $Z=l\cap Q$ and then $E$ is fitted into the exact sequence (\ref{ext}), i.e. $E\in l=\PP(Z)$. If we tensor the sequence with $\Oo_{C_H}$, then $E|_{C_H}$ lies in $\Ext^1 (\Oo_{C_H}(1-q),\Oo_{C_H}(q))=0$, i.e.
\begin{equation}
E|_{C_H}\simeq \Oo_{C_H}(p)\oplus \Oo_{C_H}(1-p).
\end{equation}
If $H$ contains $p_E$, let us choose a line $l\subset H$ containing $p_E$. If we let $Z=l\cap Q$ again, we have $E\in l=\PP(Z)$. But in this case, $E|_{C_H}$ lies in $\Ext^1 (\Oo_{C_H}, \Oo_{C_H}(1))=0$,i.e.
\begin{equation}
E|_{C_H}\simeq \Oo_{C_H}(1)\oplus\Oo_{C_H}.
\end{equation}

Similarly, when $E$ is not locally free, we can derive the same result.

Hence, all jumping conics can be characterized by $h^0(E(-1)|_{C_H})\not= 0$.
\begin{proposition}
For a stable bundle $E\in \overline{M}(2)$, the set of jumping conics $C_H$, form a hyperplane $H_E\subset \PP_3^*$ corresponding to $p_E\in \PP_3$. In particular, $\Psi$ can be also defined by sending $E$ to the set of jumping conics of $E$.
\end{proposition}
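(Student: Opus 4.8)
The plan is to reduce the statement to the single equivalence ``$C_H$ is a jumping conic $\iff p_E \in H$'', which is essentially contained in the computation preceding the proposition, and then to translate this incidence condition into the projective statement. First I would record that the generic splitting type is the balanced one: the hyperplanes through the fixed point $p_E$ form a proper linear subset of $\PP_3^*$, so a general $H$ avoids $p_E$, and for such $H$ the earlier computation gives $E|_{C_H} \simeq \Oo_{C_H}(p) \oplus \Oo_{C_H}(1-p)$, whose two factors have equal degree. A conic is therefore jumping precisely when this balance fails.

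Next I would assemble the two cases already worked out. If $p_E \notin H$, choose a line $l$ through $p_E$ meeting the smooth conic $C_H$ in a single point, put $Z = l \cap Q$, and use that $E$ lies on the secant line $\PP(Z)$ to restrict the extension (\ref{ext}) to $C_H$; this yields the balanced splitting, so $C_H$ is not jumping. If $p_E \in H$, instead choose a line $l \subset H$ through $p_E$ and set $Z = l \cap Q \subset C_H$; restricting (\ref{ext}) to $C_H$ now gives $E|_{C_H} \simeq \Oo_{C_H}(1) \oplus \Oo_{C_H}$, an unbalanced splitting, so $C_H$ is jumping. This proves the equivalence on the locus of smooth conics.

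It then remains to interpret the set of jumping conics inside $\PP_3^*$. The incidence condition $p_E \in H$ is a single linear equation in the coordinates of $H$, so the jumping conics fill out a hyperplane $H_E \subset \PP_3^*$, namely the hyperplane dual to the point $p_E$. Since the assignment $p \mapsto \{H : p \in H\}$ is the canonical isomorphism of $\PP_3$ with the space of hyperplanes of $\PP_3^*$, the datum of $H_E$ recovers $p_E = \Psi(E)$; hence $E \mapsto H_E$ carries exactly the same information as $\Psi$, which is the second assertion.

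The hard part will be the degenerate conics, \ie the tangent hyperplanes $H \in Q^{*}$ for which $C_H$ breaks into two lines and the naive $\PP_1$-splitting type is not literally defined. To handle these uniformly I would fall back on the criterion $h^0(E(-1)|_{C_H}) \neq 0$, which makes sense for every conic and which the discussion above shows characterizes jumping. As $H$ varies over $\PP_3^*$ the restrictions $E(-1)|_{C_H}$ form a flat family (obtained by tensoring the fixed sheaf $E(-1)=E(-1,-1)$ with the flat family of structure sheaves $\Oo_{C_H}$), so $h^0$ is upper semicontinuous and the jumping locus is closed in $\PP_3^*$. Since it already agrees with the hyperplane $\{H : p_E \in H\}$ on the dense open set of smooth conics, its closure must be exactly $H_E$, which completes the identification.
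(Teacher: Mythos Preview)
Your proposal is correct and follows the same route as the paper: the proposition there is stated as an immediate consequence of the two-case computation preceding it, which you have reproduced, together with the passage to the dual hyperplane. Your closing semicontinuity argument for the tangent hyperplanes is in fact more careful than the paper, which simply records the cohomological criterion $h^0(E(-1)|_{C_H})\neq 0$ without separately treating the singular conics.
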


\section{The Restriction Map}
Let $C$ be a non-hyperelliptic curve of genus 4 and it is embedded
into $\PP H^0(K_C)^*\simeq \PP_3$ and there is the unique quadric
surface $Q\subset \PP_3$ containing $C$. Let $g_3^1$ and $h_3^1$ be the two trigonal line bundles in $\Theta\subset  \Pic^3(C)$ such that $g_3^1 \otimes h_3^1=\Oo_C(K_C)$. $Q$ is smooth if and only if $|g_3^1|\not= |h_3^1|$. From now on, we assume that $Q$ is smooth. Let $SU_C(2,K_C)$ be the
moduli space of semi-stable vector bundles of rank 2 with canonical
determinant over $C$. Let $\Ww^r$ be the closure of the following
set
\begin{equation}
\{ E \in SU_C(2,K_C) ~|~ h^0(C,E)\geq r+1\}.
\end{equation}
Then we have the following inclusions \cite{OPP} on the
Brill-Noether loci,
\begin{equation}
SU_C(2, K_C) \supset \Ww \supset \Ww^1 \supset \Ww^2 \supset \Ww^3
\supset \Ww^4 =\emptyset ,
\end{equation}
where $\Ww=\Ww^0$. Many geometric descriptions of these
Brill-Noether loci have been investigated in \cite{OPP}.

Since $C$ is a divisor of $Q$ with the divisor class $(3,3)$, we have the exact
sequence
\begin{equation}\label{sesc}
0\rightarrow \Oo_Q (-3,-3) \rightarrow \Oo_Q \rightarrow \Oo_C
\rightarrow 0.
\end{equation}
Twisting the sequence(\ref{sesc}) with a stable bundle $E\in \overline{M}(2)$, we obtain that $h^0(C,E|_C)=h^0(Q,E)=3$.

\begin{lemma}
For a stable bundle $E\in \overline{M}(2)$, its restriction to $C$,
$E|_C$, is stable and so we have a rational map
$$\Phi : \overline{M}(2) \dashrightarrow \Ww^2$$
sending $E$ to $E|_C$.
\end{lemma}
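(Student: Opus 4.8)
The plan is to check three things in turn: that $E|_C$ has canonical determinant, that $E|_C$ is stable for generic $E$, and that the equality $h^0(C,E|_C)=3$ places $E|_C$ on $\Ww^2$. The last point is already recorded just before the statement (twisting (\ref{sesc}) gives $h^0(C,E|_C)=h^0(Q,E)=3$), so the real work is concentrated in stability.

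First I would settle the determinant by adjunction. Since $C$ has class $(3,3)$ and $K_Q=\Oo_Q(-2,-2)$, one gets $K_C=(K_Q\otimes\Oo_Q(C))|_C=\Oo_Q(1,1)|_C$, of degree $6=2g-2$. As the determinant commutes with restriction, $\det(E|_C)=\det(E)|_C=\Oo_Q(1,1)|_C=K_C$. Hence, once semistability is established, $E|_C$ is a point of $SU_C(2,K_C)$, and because $h^0(C,E|_C)=3\ge 2+1$ it lies on the Brill--Noether locus $\Ww^2$.

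Next comes the stability assertion, which is the heart of the matter. For generic $E$ the length-$2$ scheme $Z$ is disjoint from $C$, so $I_Z(1,1)|_C=\Oo_Q(1,1)|_C=K_C$, and restricting the defining extension (\ref{ext}) to $C$ yields $0\to\Oo_C\to E|_C\to K_C\to 0$, an extension of class $e\in\Ext^1(K_C,\Oo_C)\cong H^1(C,K_C^{-1})$. A rank-$2$ bundle $V$ with $\det V=K_C$ is stable exactly when every line subbundle has degree $<g-1=3$. Suppose then that $L\subset E|_C$ is a line subbundle with $\deg L\ge 3$, and look at the composite $L\to E|_C\to K_C$. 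If it vanishes, then $L\hookrightarrow\Oo_C$ forces $\deg L\le 0$, a contradiction. If it is nonzero it is an inclusion of line bundles $L=K_C(-D)$ for an effective $D$ of degree $\le 3$, and the existence of such a subbundle is equivalent to the pullback class, the image of $e$ under $\Ext^1(K_C,\Oo_C)\to\Ext^1(K_C(-D),\Oo_C)$, being zero. By Serre duality this last map is dual to the inclusion $H^0(K_C^2(-D))\hookrightarrow H^0(K_C^2)$, so the destabilizing condition is precisely that $e$ annihilate the subspace $H^0(K_C^2(-D))$. Since $\deg(K_C^2(-D))=12-\deg D\ge 9>2g-2$, this subspace is nonspecial of dimension $9-\deg D$, hence dimension $6$ when $\deg D=3$; its annihilator in $\Ext^1(K_C,\Oo_C)=H^0(K_C^2)^*$ is therefore $3$-dimensional. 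Thus, as $D$ ranges over $\op{Sym}^3 C$, the locus of destabilized classes in $\PP\Ext^1(K_C,\Oo_C)=\PP_8$ is swept out by linear subspaces $\cong\PP_2$ and so has dimension $\le 5$: it is a proper closed subset.

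Finally I would assemble these points. Stability is an open condition on $\overline{M}(2)$, so once a single $E$ with $E|_C$ stable is exhibited it follows that $E|_C$ is stable for all $E$ in a dense open subset, and then $E\mapsto E|_C$ defines a rational map $\Phi:\overline{M}(2)\dashrightarrow\Ww^2$ there (the non-generic strata, where $Z$ meets $C$ or where a non-locally-free $E$ has its singular point on $C$, are simply excluded from the domain). The main obstacle I anticipate is \emph{not} the case analysis above, which is routine, but controlling $e$ as $E$ varies: the family $\overline{M}(2)$ is only three-dimensional inside the eight-dimensional $\PP_8$, so a pure dimension count does not by itself guarantee that this family avoids the five-dimensional destabilized locus. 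For that reason I would, rather than argue abstractly, reduce by openness of stability to verifying stability of one explicit restriction, namely that of the bundle $\pi_p^*\Omega_{\PP_2}(2)$ produced in the proof that $\Psi$ is an isomorphism, trading the family argument for a single concrete computation.
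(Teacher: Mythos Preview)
Your proposal diverges from the paper's proof in a significant way, and in the end does not close the argument.

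The paper proves stability of $E|_C$ for \emph{every} locally free $E\in\overline{M}(2)$, not just a generic one, and does so by exploiting the isomorphism $H^0(Q,E)\simeq H^0(C,E|_C)$ that you already quoted. Given a hypothetical destabilizing line subbundle $\Oo_C(D')\subset E|_C$ with $\deg D'\ge 3$, one first shows $h^0(\Oo_C(D'))>0$ from $h^0(E|_C)=3$, so $D'$ is the zero divisor of some section $s\in H^0(E|_C)$. But $s$ is the restriction of a section $\tilde s\in H^0(Q,E)$, and the zero scheme of $\tilde s$ on $Q$ has length $2$; hence $\deg D'\le 2$, a contradiction. This argument is short, uniform in $E$, and avoids any parameter count.

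Your route through the extension class $e\in\Ext^1(K_C,\Oo_C)$ is reasonable in spirit, but you correctly diagnose its failure: the destabilized locus in $\PP\Ext^1(K_C,\Oo_C)\cong\PP_8$ has dimension at most $5$, while the image of $\overline{M}(2)$ there is only $3$-dimensional, so no transversality is forced. At that point you propose to fall back on verifying a single example, namely $\pi_p^*\Omega_{\PP_2}(2)|_C$, and invoke openness. Two problems remain. First, you do not actually carry out that verification; it is not obviously easier than the original question, since it amounts to checking stability of the pullback of $\Omega_{\PP_2}(2)$ along a degree-$6$ map $C\to\PP_2$, which still requires an argument (Lazarsfeld--Mukai type or otherwise). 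Second, even if completed, this yields only generic stability, weaker than the lemma as stated. That suffices to define the rational map $\Phi$, but the paper uses the stronger uniform statement later when analysing $\Phi$ on the complement of $C$.

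The missing idea, then, is precisely the use of $H^0(E)\simeq H^0(E|_C)$ to transport the problem back to $Q$, where sections have length-$2$ zero schemes. Once you see that, the extension-class analysis and the explicit check become unnecessary.
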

\begin{proof}
Since the embedding of $C$ into $\PP_3$ is canonical, the
restriction of $\wedge^2 E \simeq \Oo_Q (1,1)$ is $\Oo_C(K_C)$, i.e.
$\det (E|_C)=\Oo_C(K_C)$. If we tensor the exact sequence
(\ref{ext}) by $\Oo_C$, we have
$$0\rightarrow \Oo_C(D)\rightarrow E|_C \rightarrow
\Oo_C(K_C-D)\rightarrow 0,$$ where $D=Z\cap C$ as a scheme with
$l(D)\leq l(Z)=2$. Suppose that there exists a subbundle
$\Oo_C(D')\subset E|_C$ with $\deg (D')=d' \geq 3$. If the natural
composite $f: \Oo_C(D') \rightarrow \Oo_C(K_C-D)$ is zero, then
$\Oo_C(D')\subset \Oo_C(D)$, which is not possible. Thus $f$ is not
zero and must be injective so that $d'$ can be at most 6. Since
$h^0(E|_C)=3$ and $h^0(\Oo_C(K_C-D'))<2$, so $H^0(\Oo_C(D'))$ is not
trivial. Now we can assume that $D'$ is effective and the zeros of a
section in $H^0(E|_C)$. Note that $H^0(E)\simeq H^0(E|_C)$ and so
every section of $E|_C$ comes as the restriction of a section of $E$. Since the zero
of a section of $E$ has only 2 points as its support, the degree of $D'$ must be
less than 3. Hence, $E|_C$ is stable.
\end{proof}

\begin{lemma}\cite{OPP}
For a general $E$ in $\Ww^2$, the determinant map
$$\lambda_E : \wedge^2 H^0(E) \rightarrow H^0(K_C),$$
is injective.
\end{lemma}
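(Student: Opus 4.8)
The plan is to adapt the argument of Lemma~\ref{inj}, essentially verbatim, to the curve $C$: the role played there by the absence of lines on $Q$ is taken here by the absence of low-degree pencils on a trigonal curve. First I would record the genericity input. Since $\{\,h^0\geq 4\,\}=\Ww^3$ is a proper closed subset of $\Ww^2$ (the inclusion $\Ww^3\subsetneq\Ww^2$ of \cite{OPP}), and since the strictly semistable locus is likewise closed and proper, a general $E\in\Ww^2$ is stable with $h^0(C,E)=3$ exactly. Fixing such an $E$, the source $\wedge^2 H^0(E)$ is three-dimensional, and because $\dim H^0(E)=3$ every nonzero element of $\wedge^2 H^0(E)$ is decomposable.

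Next I would argue by contradiction. Suppose $\lambda_E$ is not injective. Then there are linearly independent sections $s_1,s_2\in H^0(E)$ with $s_1\wedge s_2=0$ in $H^0(\wedge^2 E)=H^0(K_C)$. Vanishing of $s_1\wedge s_2$ as a section of the line bundle $\wedge^2 E$ means $s_1$ and $s_2$ are everywhere linearly dependent in the fibres of $E$, so they generate a rank-one subsheaf; let $L\subset E$ be its saturation, a line subbundle through which both $s_i$ factor. Linear independence of $s_1,s_2$ then forces $h^0(L)\geq 2$.

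To rule this out I would invoke stability together with the gonality of $C$. As $\det E=\Oo_C(K_C)$ we have $\mu(E)=\deg K_C/2=3$, so stability gives $\deg L<3$, i.e.\ $\deg L\leq 2$. A line bundle of degree at most $2$ carrying a pencil is a $g^1_d$ with $d\leq 2$, which would make $C$ rational or hyperelliptic, contradicting the standing hypothesis that $C$ is a non-hyperelliptic (hence trigonal, gonality $3$) curve of genus $4$. Therefore no such $L$ exists, and $\lambda_E$ is injective.

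The heart of the proof—the decomposability of $\wedge^2 H^0(E)$ combined with the gonality bound—is immediate once the setup is in place, and mirrors Lemma~\ref{inj} exactly. The one step demanding genuine care, and which I expect to be the main obstacle, is the opening genericity claim: that a general $E\in\Ww^2$ is stable with $h^0(E)=3$ rather than $h^0(E)\geq 4$. This is precisely what makes the three-dimensional, all-decomposable picture valid, and it rests on the strict inclusion $\Ww^3\subsetneq\Ww^2$ from \cite{OPP} and on the strictly semistable bundles forming a proper closed sublocus; once granted, the remainder of the argument is formal.
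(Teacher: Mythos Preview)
Your proof is correct and follows essentially the same route as the paper's: assume a nontrivial element in the kernel, use decomposability of $\wedge^2 H^0(E)$ to obtain two sections generating a line subbundle $L$ with $h^0(L)\geq 2$, and then derive a contradiction from the gonality of $C$. You are more explicit than the paper about the genericity step (stability and $h^0(E)=3$) and about why stability forces $\deg L\leq 2$, but the underlying argument is identical.
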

\begin{proof}
As in [lemma\ref{inj}], let us assume that $s_1$ and $s_2$ are two sections of $E$ for which $s_1\wedge s_2$ is a non-trivial element in $\ker (\lambda_E)$. It would generate a sub-bundle $L\subset E$ with $h^0(L)\geq 2$ and $\deg(L)\leq 2$, contrary to the fact that $C$ is non-hyperelliptic.
\end{proof}

Denote by $p_E$, the point in $\PP H^0(K_C)^*\simeq \PP_3$ corresponding to the cokernel of $\lambda_E$. Sending $E$ to $p_E$, would define a map
$$\tau : \Ww^2 \dashrightarrow \PP_3.$$
Moreover we have the following diagram,
$$\xymatrix{ C\ar[r] &\PP H^0(K_C)^*\simeq \PP_3 \ar[d]^{\pi_E}\\
                     &\PP (\wedge^2H^0(E)^*) \simeq \PP_2,}$$
where $\pi_E$ is the projection from $p_E$. From the identification of $\PP H^0(K_C)^*\simeq \PP H^0(\Oo_Q(1,1))^*$ and $\PP ( \wedge^2 H^0(E|_C)^*)\simeq \PP ( \wedge^2 H^0(E)^*)$ for $E$ a stable bundle in $\overline{M}(2)$, we know that $p_{E|_C}$ does not lie on $Q$. Hence, $p_E$ does not lie on $Q$ for general $E\in \Ww^2$.

Note that $\PP (\wedge^2H^0(E)^*)$ is the Grassmannian $Gr(H^0(E), 2)$ and its universal quotient bundle is $\Omega_{\PP_2}(2)$. From the construction, we have $\pi_E^*(\Omega_{\PP_2}(2))|_C\simeq E$ and the Chern classes of $\pi_E^*(\Omega_{\PP_2}(2))|_Q$ is $(c_1,c_2)=(\Oo_Q(1,1),2)$ since $\pi_E:Q\rightarrow \PP_2$ is a finite map of degree 2.

\begin{lemma}\label{stable}
$E_Q:=\pi_E^*(\Omega_{\PP_2}(2))|_Q$ is a stable vector bundle on $Q$.
\end{lemma}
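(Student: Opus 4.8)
The plan is to realize $E_Q$ by a two-term resolution pulled back from $\PP_2$, and then rule out destabilizing sub-line-bundles by a short cohomology computation. First I would use the identification $\Omega_{\PP_2}(2)\cong T_{\PP_2}(-1)$ together with the Euler sequence twisted by $\Oo_{\PP_2}(-1)$,
$$0\rightarrow \Oo_{\PP_2}(-1)\rightarrow \Oo_{\PP_2}^{\oplus 3}\rightarrow \Omega_{\PP_2}(2)\rightarrow 0.$$
Because $p_E\notin Q$, the projection $\pi_E:Q\rightarrow \PP_2$ is an everywhere-defined finite morphism of degree $2$ between smooth varieties, hence flat; so pulling back preserves exactness, and using $\pi_E^*\Oo_{\PP_2}(1)=\Oo_Q(1,1)$ I obtain the resolution
$$0\rightarrow \Oo_Q(-1,-1)\rightarrow \Oo_Q^{\oplus 3}\rightarrow E_Q\rightarrow 0. \qquad (\ast)$$
This resolution is the heart of the argument; everything else is bookkeeping with line bundles on $Q\simeq \PP_1\times\PP_1$.

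Next I would bound the possible destabilizing subsheaves. Dualizing $(\ast)$ and using $E_Q^*\cong E_Q(-1,-1)$ (since $\det E_Q=\Oo_Q(1,1)$) yields an inclusion $E_Q\hookrightarrow \Oo_Q(1,1)^{\oplus 3}$. Hence any rank-one subsheaf $\Oo_Q(a,b)\subset E_Q$ admits a nonzero map to $\Oo_Q(1,1)^{\oplus 3}$, which forces $a\leq 1$ and $b\leq 1$. On the other hand $\mu_H(E_Q)=\tfrac{1}{2}\,c_1(E_Q)\cdot H=1$, so a saturated sub-line-bundle $\Oo_Q(a,b)$ can destabilize $E_Q$ only if $a+b\geq 1$. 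Combining the two constraints leaves exactly the three candidates $(a,b)\in\{(1,0),(0,1),(1,1)\}$.

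Finally I would eliminate these three candidates by showing $H^0(E_Q(-1,0))=H^0(E_Q(0,-1))=H^0(E_Q(-1,-1))=0$. Twisting $(\ast)$ by $\Oo_Q(-1,0)$, $\Oo_Q(0,-1)$, and $\Oo_Q(-1,-1)$ in turn and taking the long exact sequence in cohomology, each group $H^0(E_Q(-a,-b))$ is squeezed between two terms that vanish by the Künneth formula on $\PP_1\times\PP_1$: the $H^0$ of the middle term vanishes because one of the two twists is negative, while the $H^1$ of the kernel line bundle (namely $H^1(\Oo_Q(-2,-1))$, $H^1(\Oo_Q(-1,-2))$, $H^1(\Oo_Q(-2,-2))$) vanishes since in each $\PP_1$-factor one of the two relevant cohomology groups is zero. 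This forces $H^0(E_Q(-a,-b))=0$ for all three candidates, so no sub-line-bundle destabilizes $E_Q$ and it is stable.

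The one genuinely substantive step is producing $(\ast)$. I expect the point requiring the most care to be the reduction to the three candidate subsheaves, since it relies on having both $(\ast)$ and its dual in hand and on the hypothesis $p_E\notin Q$, which is exactly what makes $\pi_E|_Q$ a morphism (so that $(\ast)$ is a genuine resolution by vector bundles). The cohomology vanishings in the last step are then routine.
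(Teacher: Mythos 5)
Your proof is correct, but it takes a genuinely different route from the paper's. The paper pulls back the other standard short exact sequence for $\Omega_{\PP_2}(2)$, namely $0\rightarrow \Oo_{\PP_2}\rightarrow \Omega_{\PP_2}(2)\rightarrow I_p(1)\rightarrow 0$, coming from a section vanishing at a single point $p\in\PP_2$: since $\pi_E|_Q$ is finite of degree $2$, this exhibits $E_Q$ as a non-trivial extension in $\Ext^1(I_Z(1,1),\Oo_Q)$ with $Z=\pi_E^{-1}(p)$ of length $2$, and stability then follows from the lemma of Section 2 asserting that non-trivial extensions of type (\ref{ext}) are stable once $Z$ lies on no line of $Q$ (which holds here because $Z$ spans a line through $p_E$ not contained in $Q$, as $p_E\notin Q$). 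You instead pull back the twisted Euler sequence to get the locally free resolution $(\ast)$, use its dual to bound the bidegrees of candidate destabilizing line subbundles, and then kill the three candidates $(1,0)$, $(0,1)$, $(1,1)$ by Künneth vanishing; each of these steps is sound --- the identification $\Omega_{\PP_2}(2)\simeq T_{\PP_2}(-1)$, the pull-back $\pi_E^*\Oo_{\PP_2}(1)\simeq\Oo_Q(1,1)$, the vanishings $H^1(\Oo_Q(-2,-1))=H^1(\Oo_Q(-1,-2))=H^1(\Oo_Q(-2,-2))=0$, and the reduction to saturated line subbundles (standard on a smooth surface) are all correct. As for what each approach buys: the paper's argument is shorter given its earlier lemma and, more importantly, it places $E_Q$ back into the extension picture (\ref{ext}), which is exactly what the surrounding Proposition uses to see that $E\mapsto E_Q$ inverts $\Psi$; your argument is self-contained and purely cohomological, independent of the Section 2 lemma (whose own proof the paper leaves terse), at the mild cost of establishing stability without directly recovering that extension description.
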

\begin{proof}
If we take the pull-back $(\pi_E|_Q)^*$ to the following sequence,
\begin{equation}
0\rightarrow \Oo_{\PP_2} \rightarrow \Omega_{\PP_2}(2) \rightarrow I_p(1) \rightarrow 0,
\end{equation}
where $I_p$ is the ideal sheaf of a point $p\in \PP_2$, then $E_Q$ is an non-trivial extension class in $\Ext^1(I_Z(1,1), \Oo_Q)$. Any non-trivial element in this extension can be easily checked to be stable.
\end{proof}

\begin{corollary}
The restriction map $\Phi$ is birational.
\end{corollary}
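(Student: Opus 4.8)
The plan is to exhibit an explicit rational inverse of $\Phi$, assembled from the two maps already at hand: the isomorphism $\Psi:\overline{M}(2)\xrightarrow{\sim}\PP_3$ and the rational map $\tau:\Ww^2\dashrightarrow \PP_3$ sending $E$ to $p_E$. Concretely I would set
$$\sigma:=\Psi^{-1}\circ\tau:\Ww^2\dashrightarrow \overline{M}(2),\qquad \sigma(F)=\pi_F^*(\Omega_{\PP_2}(2))|_Q,$$
where $\pi_F$ is the projection from $p_F$. This is legitimate on the dense open locus of $F\in\Ww^2$ on which $\lambda_F$ is injective and $p_F\notin Q$, for there $\pi_F|_Q$ is a finite degree-$2$ morphism to $\PP_2$; by Lemma \ref{stable} the pullback $\pi_F^*(\Omega_{\PP_2}(2))|_Q$ is then a stable bundle with Chern classes $(\Oo_Q(1,1),2)$, hence a genuine point of $\overline{M}(2)$.

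First I would verify that $\sigma$ is a right inverse of $\Phi$. For general $F$ the construction identity $\pi_F^*(\Omega_{\PP_2}(2))|_C\simeq F$ recorded just before Lemma \ref{stable} gives
$$\Phi(\sigma(F))=\sigma(F)|_C=\pi_F^*(\Omega_{\PP_2}(2))|_C\simeq F,$$
so $\Phi\circ\sigma=\op{id}$ generically; in particular $\Phi$ is dominant onto $\Ww^2$.

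Next I would verify that $\sigma$ is a left inverse of $\Phi$. For a stable bundle $E\in\overline{M}(2)$ the identifications $H^0(E)\simeq H^0(E|_C)$ and $H^0(\Oo_Q(1,1))\simeq H^0(K_C)$ match the two determinant maps, so their cokernels agree and $p_{E|_C}=p_E=\Psi(E)$. Consequently $\pi_{E|_C}=\pi_{p_E}$ and
$$\sigma(\Phi(E))=\sigma(E|_C)=\pi_{p_E}^*(\Omega_{\PP_2}(2))|_Q=\Psi^{-1}(\Psi(E))=E,$$
the middle equality being exactly the description of $\Psi^{-1}$ extracted from the proof that $\Psi$ is an isomorphism. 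Thus $\sigma\circ\Phi=\op{id}$ generically as well, and $\Phi$ is birational with inverse $\sigma$.

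I expect the only delicate point to be the identification $p_{E|_C}=p_E$, that is, the compatibility of the determinant map on $C$ with the one on $Q$ under the canonical identifications of cohomology; once this is in place, together with Lemma \ref{stable} and the identity $\pi^*(\Omega_{\PP_2}(2))|_C\simeq E$, the two inversion computations are formal. The remaining care is purely bookkeeping: restricting to the common dense open subsets of $\overline{M}(2)$ and $\Ww^2$ on which $\lambda$ is injective, $p\notin Q$, and all restrictions remain stable, so that $\sigma$ and $\Phi$ are mutually inverse there.
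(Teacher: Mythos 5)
Your proposal is correct and is essentially the paper's own (implicit) argument: the corollary is left without proof there precisely because it follows from the preceding construction of $\tau$, the projection $\pi_E$, Lemma \ref{stable}, and the identity $\pi_E^*(\Omega_{\PP_2}(2))|_C\simeq E$, which you simply assemble into the explicit rational inverse $\sigma=\Psi^{-1}\circ\tau$ and check both composites. The one point you rightly flag as delicate, the compatibility $p_{E|_C}=p_E$ under the identifications $H^0(E)\simeq H^0(E|_C)$ and $H^0(\Oo_Q(1,1))\simeq H^0(K_C)$, is exactly the identification the paper itself invokes when asserting that $p_E\notin Q$ for general $E\in\Ww^2$.
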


\begin{remark}
Note that $g_3^1$ and $h_3^1$ are isomorphic to $\Oo_Q(1,0)|_C$ and $\Oo_Q(0,1)|_C$. In particular, $E_0|_C=g_3^1\oplus h_3^1$ and $h^0(E_0)=h^0(E_0|_C)=4$. As pointed in \cite{OPP}, $g_3^1\oplus h_3^1$ can be considered as the unique point of $\Ww^3$.
\end{remark}
\begin{remark}
Let $E$ be a non-bundle in $\overline{M}(2)$ and $p$ be its singularity point. If $p\not\in C$, then $E|_C\simeq E_0|_C \in \Ww^3$. If $p\in C$, then $E|_C$ is not torsion-free. In particular, the indeterminacy locus of $\Phi$ is exactly $C\subset \PP_3\simeq \overline{M}(2)$.
\end{remark}

\begin{proposition}
The map $\Phi$ is given by the complete linear system $|I_C(3)|$.
\end{proposition}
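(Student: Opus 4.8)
The plan is to show that $\Phi$ is defined by a linear system of cubic surfaces through $C$ and that this system is complete. Since we already know that $\Phi$ is birational and that its indeterminacy locus is exactly the curve $C\subset \PP_3\simeq \overline{M}(2)$, the map $\Phi$ is given by some linear subsystem $V\subseteq H^0(\PP_3,I_C(d))$ whose base locus is $C$, for a single degree $d$. Thus the entire content reduces to proving that $d=3$ and that $V=H^0(I_C(3))$.

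First I would compute $h^0(I_C(3))$. Because $C$ is canonically embedded, $\Oo_C(1)\simeq K_C$, so $\Oo_C(3)\simeq 3K_C$ has degree $18>2g-2$ and hence $h^0(\Oo_C(3))=18-g+1=15$. As a non-hyperelliptic curve of genus $4$ is projectively normal (Max Noether), the restriction $H^0(\Oo_{\PP_3}(3))\to H^0(\Oo_C(3))$ is surjective, giving $h^0(I_C(3))=20-15=5$. Hence $|I_C(3)|\simeq \PP_4$, which matches the ambient $\PP_4$ of $\Ww^2$; moreover $h^0(I_C(1))=0$ and $h^0(I_C(2))=1$, so no system of degree $\leq 2$ could map onto a threefold in $\PP_4$.

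The main step is to establish $d=3$, that is, to identify $\Phi^*\Oo_{\Ww^2}(1)$. I would resolve $\Phi$ by blowing up $C$: writing $\pi:\Bl_C\PP_3\to\PP_3$ with exceptional divisor $\Ee$ and $h=\pi^*\Oo_{\PP_3}(1)$, the induced morphism $\wt{\Phi}$ satisfies $\wt{\Phi}^*\Oo_{\Ww^2}(1)=dh-\Ee$, the multiplicity along the reduced smooth curve $C$ being $1$. To pin down $d$ intrinsically I would pull back the ample generator $\Ll$ of $\Pic(SU_C(2,K_C))$ restricted to $\Ww^2$ through the family of restrictions $E|_C$ determined by a universal sheaf on $\overline{M}(2)\times Q$, and verify $\Phi^*\Ll=\Oo_{\PP_3}(3)$. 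As a consistency check, with $\deg C=6$ and $\deg N_{C/\PP_3}=\deg K_C+4\deg C=30$ one computes $(3h-\Ee)^3=27-54+30=3$, so that $|I_C(3)|$ maps $\PP_3$ birationally onto a threefold of degree $3$; this is exactly what will yield the identification of $\Ww^2$ with the Donagi-Izadi cubic threefold afterwards.

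Finally, once $d=3$ is known, $V\subseteq H^0(I_C(3))$ and the image $\Ww^2$ is a nondegenerate threefold spanning $\PP_4$, so $\dim V=5=h^0(I_C(3))$, forcing $V=H^0(I_C(3))$ and hence completeness. I expect the genuine obstacle to be the determination of $d$: one must compute the pullback of the polarization \emph{without} presupposing that $\Ww^2$ is a cubic threefold, since that conclusion is precisely what the identification with $|I_C(3)|$ is meant to deliver. The dimension count and the resolution of the indeterminacy are then routine.
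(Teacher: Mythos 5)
Your proposal correctly isolates the two things to be proved (the degree $d=3$ of the defining system, and its completeness), and the routine parts are right: $h^0(I_C(3))=20-15=5$ by projective normality of the canonical curve, and the completeness argument once $d=3$ is in hand. But the crux --- determining $d$ --- is exactly the step you do not prove. You say you would ``pull back the ample generator $\Ll$ of $\Pic(SU_C(2,K_C))$ \dots\ and verify $\Phi^*\Ll=\Oo_{\PP_3}(3)$,'' but no such computation is carried out; it would require a nontrivial determinant-of-cohomology/Grothendieck--Riemann--Roch calculation on a universal family over $\overline{M}(2)\times C$, and, as you yourself note, it is the real obstacle. The ``consistency check'' $(3h-\Ee)^3=3$ does not substitute for it: it shows only that \emph{if} $d=3$ the image has degree $3$, and it does not exclude other values (e.g.\ $d=4$ gives $(4h-\Ee)^3=64-72+30=22$, which by itself is perfectly consistent with some birational map). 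The claim that the multiplicity of the system along $C$ is $1$, i.e.\ that $\wt{\Phi}^*\Oo_{\Ww^2}(1)=dh-\Ee$ rather than $dh-m\Ee$ with $m\geq 2$, is likewise asserted without argument, though that is a minor point. As it stands the proposal is a correct plan with its central step missing.

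The paper closes precisely this gap using an input that is available to you but that you never use: $\Phi$ contracts $Q\setminus C$ to the single point $E_0|_C$ (the remark on non-bundles immediately preceding the proposition). The paper restricts $\Phi$ to a general hyperplane $H$: the indeterminacy there is the six points $H\cap C$, which lie on the conic $H\cap Q$; blowing them up gives a cubic surface $S_6$, and the induced morphism $f_H\colon S_6\to \Ww^2$ contracts the proper transform of that conic, which forces the degree to be $3$. In your own blow-up framework this becomes a one-line intersection computation: the proper transform $\wt{C_2}\subset \Bl_C\PP_3$ of a general conic $C_2=H\cap Q$ satisfies $h\cdot \wt{C_2}=2$ and $\Ee\cdot \wt{C_2}=6$ (since $C\cdot C_2=6$ on $Q$), so $(dh-\Ee)\cdot \wt{C_2}=2d-6$; because $\wt{\Phi}$ contracts $\wt{C_2}$ to a point and $dh-\Ee$ is the pullback of an ample class under the resolved morphism, this number must vanish, giving $d=3$ with no moduli-theoretic computation at all. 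With that substitution, your dimension count and completeness argument (which, like the paper's, still needs the external fact from \cite{OPP} that $\Ww^2$ is not isomorphic to $\PP_3$, equivalently that it spans a $\PP_4$) complete the proof.
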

\begin{proof}
We know that $\Phi$ is an isomorphism on $\PP_3 \backslash Q$ and sends $Q\backslash C$ to one point $E_0|_C$.
Let $H$ be a general hyperplane in $\PP_3$. The intersection of $H$ with $C$ is 6 points on $Q$, say $P_1, \cdots,P_6$ and these points lie on a conic $C_2=H\cap Q$. The restriction of $\Phi$ to $H$ is not defined on $P_i$'s and maps the other points of $C_2$ to $E_0|_C$.

Let $S_6$ be the blow-up of $H$ at $P_i$'s and then we obtain a morphism $f_H$ from $S_6$ to $\Ww^2$.
\begin{equation}
\xymatrix{ & S_6 \ar[ld] \ar[rd]^{f_H}\\
H \ar@{-->}[rr] && \Ww^2}
\end{equation}
The proper transform of $C_2$ in $S_6$ is a line $l_H$ and $f_H$ contracts $l_H$ to a point. Since $S_6$ is a cubic surface in $\PP_3$, the degree of $f_H$ is 3 and so is the degree of $\Phi$.

Recall that the indeterminacy of $\Phi$ is $C$. Since $h^0(I_C(3))=5$ and $\Ww^2$ is a 3-fold which is not isomorphic to $\PP_3$, so $\Phi$ must be given by the complete linear system $|I_C(3)|$.
\end{proof}
\begin{remark}
The image of $\PP_3$ via $|I_C(3)|$ is known to be the \textit{Donagi-Izadi cubic threefold} in $\PP_4^*$ \cite{Donagi}. This is singular with a nodal point $P=E_0|_C$.
\end{remark}

Let $\pi_P$ be the projection from $\PP_4^*$ at $P$ and then we have the following commutative diagram,
\begin{equation}
\xymatrix{ \overline{M}(2) \ar@{-->}[r]^{\Phi}\ar[d]_{\Psi}^{\wr} & \Ww^2 \ar@{-->}[d]^{\pi_P} \ar@{-->}[ld]^{\tau} \\
        \PP_3 \ar[r]_{f_Q}^{\sim} & \PP_3^*,}
\end{equation}
where the map $f_Q$ is defined as follows: Let $H'$ be a hyperplane in $\PP_3^*$ and then it pulls back via $\pi_P$ to a hyperplane in $\PP_4^*$ containing $P$, and again to $Q$ and a residual hyperplane $H\subset \PP_3$ by $\Phi$.

Let $p_E$ be a point in $\PP_3$ corresponding to $E\in \overline{M}(2)$ and $H'_E$ be its hyperplane in $\PP_3^*$. As above, we can assign a residual hyperplane $H_E\subset \PP_3$ and a conic $C_E= H_E\cap Q$ to $E$. Simply, $f_Q$ is a polar map given by
\begin{equation}
[x_0, \cdots, x_3] \mapsto [\frac{\partial f}{\partial t_0}(x), \cdots, \frac{\partial f}{\partial t_3}(x)],
\end{equation}
where $f$ is the homogeneous polynomial of degree 2 defining $Q$. The hyperplane $H_E\subset \PP_3$, corresponding to $f_Q (p_E)$, is given by the equation,
\begin{equation}
\sum_{i=0}^3 \frac{\partial f}{\partial t_i}(p_E)t_i=0,
\end{equation}
and, from the Euler formula, it is clear that $p_E\in Q$ is equivalent to $p_E\in H_E$.
Assume that $p_E \not \in Q$. Let us recall that $C_E=H_E\cap Q$ is the set of points $p\in Q$ for which $p_E$ is contained in the tangent plane of $Q$ at $p$. In particular, $E$ is fitted into an extension (\ref{ext}) where $Z$ is a point $p$ with multiplicity $2$. In other words, there exists a section $s$ of $E$ whose zero is $p$ with multiplicity $2$. We can have the same argument for the case when $p_E\in Q$.

\begin{proposition}
Let $E$ be a stable sheaf in $\overline{M}(2)$. The set of points $p\in Q$ which is the zero with multiplicity 2 for some section of $E$, forms a conic $C_E$ in $Q$. This defines an isomorphism
$$f_Q \circ \Psi : \overline{M}(2) \rightarrow  \PP_3^*.$$
\end{proposition}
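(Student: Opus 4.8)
The plan is to reduce the statement to two already-available facts: that $\Psi \colon \overline{M}(2) \to \PP_3$ is an isomorphism, and that the polar map $f_Q$ of the smooth quadric $Q$ is a linear isomorphism. The geometric assertion about the conic will then identify the locus of double zeros with the polar conic $C_E = H_E \cap Q$ that has already been attached to $E$ in the discussion preceding the statement.

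First I would prove that the locus of points occurring as multiplicity-$2$ zeros of sections of $E$ is exactly $C_E$. If a section $s$ of $E$ has a double zero at $p$, its zero scheme is $Z = 2p$, supported at $p$; the line $\PP(Z)$ through $Z$ then meets $Q$ with multiplicity $2$ at $p$, i.e. it is tangent to $Q$ at $p$. By the construction of $\psi_E$ this line passes through $p_E = \Psi(E)$, so $p_E \in T_p Q$, which by definition of the polar plane means $p \in H_E$, hence $p \in H_E \cap Q = C_E$. Conversely, for any $p \in C_E$ we have $p_E \in T_p Q$, so the line $\overline{p_E p} \subset T_p Q$ is tangent to $Q$ at $p$ and meets $Q$ in the double point $2p$; feeding this $Z = 2p$ into the extension (\ref{ext}) produces a section of $E$ with a double zero at $p$, exactly as in the paragraph preceding the statement. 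This shows the locus equals $C_E = H_E \cap Q$, which is a conic as the intersection of a hyperplane with $Q$.

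Next I would verify that $f_Q$ is an isomorphism. Writing $f(t) = t^{T} A t$ with $A$ the symmetric matrix defining $Q$, the polar map is projectively $[t] \mapsto [A t]$; since $Q$ is smooth, $A$ is nondegenerate, so $f_Q \colon \PP_3 \to \PP_3^*$ is a projective-linear isomorphism. Combined with the earlier Proposition that $\Psi$ is an isomorphism, this gives at once that $f_Q \circ \Psi \colon \overline{M}(2) \to \PP_3^*$ is an isomorphism.

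The step I expect to be most delicate is checking the coincidence of the two descriptions of $C_E$ uniformly across the degenerate cases. When $p_E \in Q$, i.e. $E$ is a non-bundle, the polar plane $H_E$ is the tangent plane $T_{p_E} Q$ and $C_E$ degenerates into the pair of rulings through $p_E$; one must confirm that the double-zero locus still fills this degenerate conic, in agreement with the non-bundle remarks, and that those lines $\overline{p_E p}$ which happen to lie on $Q$ do not spuriously enlarge the locus. When $p_E \notin Q$ one should invoke the standard polarity fact that the tangent lines from $p_E$ touch $Q$ precisely along $C_E$, so that the double zeros sweep out the entire conic and nothing more.
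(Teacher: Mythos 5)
Your proposal is correct and takes essentially the same route as the paper: the paper's proof is precisely the discussion preceding the proposition, which uses polarity to identify $C_E = H_E \cap Q$ with the points whose tangent plane contains $p_E$, notes that each tangent line through $p_E$ yields $Z = 2p$ in the extension (\ref{ext}) and hence a section with a double zero, and combines the earlier isomorphism $\Psi$ with the polar map $f_Q$. Your explicit verification that $f_Q$ is a linear isomorphism and your flagging of the degenerate case $p_E \in Q$ are slightly more careful than the paper's one-line remark, but the argument is the same.
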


\bibliographystyle{amsplain}
%\GATHER{MultiplierNotes.bib}
\bibliography{quadric}
\end{document}